\newtheorem{theorem}{Theorem}
\newtheorem{lemma}{Lemma}
\newtheorem{proposition}{Proposition}
\newtheorem{definition}{Definition}
\newenvironment{proof}{{\sc Proof:}}{~\hfill$\Box$}
\newtheorem{remark}{Remark}
\def\e{\mathrm{e}}
\def\Z{\mathbb{Z}}
\def\N{\mathbb{N}}
\def\ra{\rightarrow}
\def\R{\mathbb{R}}
\begin{document}

\title{Wigner matrices, the moments of roots of Hermite polynomials and the semicircle law
}


\author{\Large{M. Kornyik}  \\  E\"otv\"os Lor\'and University \\ Department of Probability Theory and Statistics \\ P\'azm\'any P\'eter s\'et\'any 1/C., H-1117, Budapest, Hungary \\
              \textit{email}:koma@cs.elte.hu        \and
        \Large{Gy. Michaletzky} \\   E\"otv\"os Lor\'and University \\ Department of Probability Theory and Statistics \\ P\'azm\'any P\'eter s\'et\'any 1/C., H-1117, Budapest, Hungary\\
              \textit{email}:michaletzky@caesar.elte.hu}

\if 0\begin{center}Mikl\'os Kornyik \\
           \\
           Gy\"orgy Michaletzky \\

\end{center} \fi


\maketitle

\begin{abstract}
In the present paper we give two alternate proofs of the well known theorem that
the empirical distribution of the appropriately normalized roots
of the $n^{th}$ monic Hermite polynomial $H_n$ converges weakly to the semicircle law, which is also the weak limit of the empirical
distribution of appropriately normalized eigenvalues of a Wigner matrix. In the first proof -- based on the recursion satisfied by the Hermite polynomials -- we show that the generating
function of the moments of roots of $H_n$ is convergent and it satisfies a fixed point equation, which is also satisfied by $c(z^2)$,
where $c(z)$ is the generating function of the Catalan numbers $C_k$. In the second proof we compute the leading and the second leading term of the
$k^{th}$ moments (as a polynomial in $n$) of $H_n$ and show that the first one coincides with $C_{k/2}$, the $(k/2)^{\rm th}$ Catalan number,
where $k$ is even and the second one is given by $-(2^{2k-1}-\binom{2k-1}{k})$. We also mention the known result that the expectation of the characteristic polynomial ($p_n$) of a
Wigner random matrix is exactly the Hermite polynomial ($H_n$), i.e. $Ep_n(x)=H_n(x)$, which suggest the presence of a
deep connection between the Hermite polynomials and Wigner matrices. \\
\textbf{Keywords:} Random matrix; characteristic polynomial; semicircle law; moments of roots of Hermite polynomials \\
\textit{MSC[2010]}: 15A52; 60B20; 33C45
\end{abstract}

\section{Introduction}

\label{intro}
In random matrix theory to analyse the behaviour of the eigenvalues of a random matrix one possibility is to consider the sum of
the $k^{\rm th}$ powers of its eigenvalues. This can be done either via analysing the trace of the $k^{\rm th}$ power of the
random matrix, or through the $k^{\rm th}$ moments of the roots of its characteristic polynomial.
One can find many results of the former type (see \cite{Bai88}, \cite{mar67}), but the latter has not yet been
thoroughly investigated (\cite{bre00}, \cite{for06}, \cite{hardy15}).
Since there is an implicit connection between the moments and elementary symmetric polynomials of the roots (Newton's identities),
one can make observations of the moments of roots via examining the coefficients of the characteristic polynomial. \\
Let us introduce the following 
\begin{definition} A random symmetric matrix $A=[a_{ij}]_{i, j=1,\ldots,n}$ is called a Wigner matrix, if all the elements $(a_{ij})_{1\leq i \leq j \leq n}$ are independent with zero mean, the elements on the diagonal are identically distributed, and the off-diagonal elements are identically distributed with finite second moments.
\end{definition}


 Forrester and Gamburd proved in \cite{for06} that if $A$ is a Wigner matrix with its off-diagonal elements having variance $c^2>0$ then one has \begin{equation} \label{expcharp} E\det [\lambda I- A]=c^nH_n(x/c), \end{equation}
where $H_n(x)$ is the $n$-th monic Hermite polynomial given by
\begin{equation*}  H_n(x)= \sum_{k=0}^{\lfloor n/2 \rfloor}(-1)^k \binom{n}{2k} (2k-1)!! x^{n-2k}. \end{equation*}
We would like to remark, that in order to get (\ref{expcharp}) it is sufficient to assume independence of all the free elements of $A$, i.e. the independence of $a_{ij}$ for $1\leq i \leq j \leq n$, $E[a_{ij}]=0$ for $1\leq i\leq j \leq n$ and $E[a_{kl}^2]=c^2<\infty$ for $1\leq k <l \leq n$. 
Their proof goes per definition, that is computing
$$ E\det [\lambda I-A]=\sum_{\sigma\in S_n} (-1)^{|\sigma|} E\prod_{i=1}^n{(\lambda_{i}\delta_{i\sigma(i)}-a_{i\sigma(i)})}. $$
Note that although the assumptions on the random matrix are not very restrictive, yet the resulting expectation of the characteristic polynomial is a very specific one, namely the one orthogonal with respect to the density function of the standard normal distribution. This fact suggests the presence of an intrinsic connection between Wigner matrices and Hermite polynomials. 
Hermite polynomials have another interesting property; they coincide with the matching polynomial $M_{K_n}(x) $ of the complete graph $K_n$. 
The matching polynomial of a graph $G=(V,E)$ is defined by $$ M_G(x)=\sum_{k\geq 0} (-1)^k m_k (G)x^{n-2k} ,$$
where $m_k(G)$ denotes the number of matchings with exactly $k$ edges and $n=|V|$.

\if 0 In section 1 for the sake of completeness we will reprove the result of Diaconis and Gamburd \cite{Dia04} and Forrester and Gamburd \cite{for06} claiming that
the expectation of the characteristic polynomial of a Wigner matrix is the Hermite polynomial, i.e. if $A_n$ denotes a
Wigner matrix of size $n\times n$ ( a symmetric matrix with real valued random independent elements
such that $Ea_{ij}=0$, for $1\leq i \leq n$ $a_{ii}$'s are iid, for $1\leq i <j \leq n$ $a_{ij}$'s are iid and the diagonal and off diagonal elements are independent), then
    \begin{equation}\label{1}
        Ep_n(x)=E\det(xI-A_n)=H_n(x),
    \end{equation}
where $H_n(x)$ denotes the $n^{th}$ monic Hermite polynomial with leading coefficient $1$, i.e.
$$ H_n(x)= \sum_{k=0}^{\lfloor n/2 \rfloor} \binom{n}{2k} (k-1)!! x^{n-2k} $$
These polynomials are known to be orthogonal with respect to the measure $d\mu(x)=\frac {1}{2\pi} \e^{-x^2/2}dx$.
The expectation in (\ref{1}) refers to that of the coefficients of the characteristic polynomial of $A_n$.
To compute this we also present a formula
expressing the particular coefficients of this polynomial in terms of the elements of the matrix $A_n$. \fi 

\if 0 \begin{remark}
Usually a Wigner matrix is random symmetric matrix, with the following properties:
\begin{itemize}
	\item[1.] the diagonal elements are independent random variables with zero mean,
	\item[2.] the off-diagonal elements are independent random variables with identical finite second moments, 
	\item[3.] the diagonal and off-diagonal elements are independent. 
\end{itemize}
\end{remark} \fi
In the first theorem of Section 1 we are going to present a short, direct proof of the well known theorem stating that the
semicircle law describes the asymptotic distribution of the (normalized) roots of the Hermite polynomials by showing that the generating function (without computing the actual coefficients) of the sum of $k^{th}$ power of the roots
converges to $\sum_{k=0}^\infty{C_kz^{2k}}=c(z^2) $, where $c(z)$ denotes the generating function of the Catalan numbers, using a fixed point argument similar to
Girko's idea in \cite{gir85}.

In the second theorem we explicitly compute the leading and the second leading coefficient in $n$ of the sum of $k^{th}$ power of the roots
and show that the first one is equal to $C_{k/2}$ and the second one is equal to $-(2^{2k-1}-\binom{2k-1}{k})$ by using the implicit connection  between the moments
and elementary symmetric polynomials of the roots of an arbitrary polynomial (also known as Newton's identities or Vi\'eta's formulae).
This result also implies -- after proper scaling -- the weak convergence of the empirical distribution
of the scaled roots of $H_n$ as $n\rightarrow \infty$ to the semicircle law and the convergence rate cannot be faster than $O(1/n)$.


\if 0 \begin{theorem}[See \cite{for06}]\label{exp'd_Wigner} 
Consider a Wigner matrix $A$ of size $n \times n$, defined as in the introduction, normalized as $Ea_{ij}^2 =1$, when $i\neq j$.
Denote by $p$ its random characteristic polynomial, i.e. $p (x) = \det (xI = A)$. Then
\begin{equation}\label{exp_wigner}
E p(x) = H_n (x)\,,
\end{equation}
where $H_n$ is the monic Hermite polynomial of degree $n$.
\end{theorem} \fi
\if 0 Since this result can be found in \cite{for06} we will omit its proof. Note that the assumptions of this theorem are very general, yet the resulting polynomial is a very specific one. \fi
\if 0
\begin{proof} For the proof we will need the following proposition stated without proof.
\begin{proposition}\label{charpol}
Let $\mathcal A$ be an arbitrary matrix of size $n\times n$. Then
\[
p(x) = \det (xI-\mathcal A) = \sum_{k=0}^n{(-1)^k \sum_{\substack{I\subset N\\ |I|=k}}{\det {\mathcal A}_{I} x^{n-k}}}\,,
\]
 where ${\mathcal A}_{I}$ denotes the $k\times k$ submatrix of $\mathcal A$ spanned by columns and
rows indexed by $i_1,\ldots,i_k\in I$ and $N=[1,n]\cap \mathbb{N}$.
\end{proposition}
Based on this result we have that
\begin{align}
        Ep_n(x)=\sum_{k=0}^n{(-1)^k x^{n-k}\sum_{\substack{I\subset N\\ |I|=k}}{E\det A_I }}\,.
   \end{align}
On the other hand
   \begin{align}\nonumber
        E\det A_I =\sum\limits_{\alpha\in S_k(I)}{(-1)^{|\alpha|}E\prod_{i\in I}a_{i\alpha(i)}}\,,
   \end{align}
where $S_k(I)$ denotes the set of permutation of the elements in $I$.
  Since our matrix $A$ is symmetric with independent elements (i.e. $(a_{ij}),\ \ 1\leq i\leq j\leq n $ are independent
random variables and $a_{ij}=a_{ji}$ for $i>j$), we have
  \begin{align}\nonumber
 E[a_{i_1\alpha_{i_1}}\cdots a_{i_k\alpha_{i_k}}]\neq 0 \ \Leftrightarrow \
\mbox{k is even and} \ \ E[a_{i_1\alpha_{i_1}}\cdots a_{i_k\alpha_{i_k}}]=E\prod_{j=1}^{k/2} a_{i_j'i_j''}^2 \end{align}
  for some indices $1\leq i'_j,i''_j\leq n$.
  This means that the only permutations that count are those which can be written as the product of $k/2$ disjoint 2-cycles. There is $(k-1)!!$ of these, hence
  \begin{align}\nonumber
        \sum\limits_{\alpha\in S_k(I)}{(-1)^{|\alpha|}E[a_{i_1\alpha_{i_1}}\cdots a_{i_k\alpha_{i_k}}]}=(-1)^{k/2}(k-1)!!\ ,
  \end{align}
which is independent of the special choice of the subset $I \subset N$ with $ |I|=k$,
  so
  \begin{equation}
        Ep(x)=\sum_{0\leq k \leq n, 2|k}{(-1)^{k/2}\binom{n}{k}(k-1)!! x^{n-k} }
	=\sum_{0\leq k \leq n, 2|k}{\frac{(-1)^{k/2}\binom{n}{k}k!}{2^{k/2}(k/2!)} x^{n-k} } \nonumber\\
  \end{equation}

In order to finish the proof of the theorem we have to show that the coefficients above are equal to the coefficients of the
monic Hermite polynomial $H_n$. In the following proposition we give a short proof of this fact. (See Szeg\H o \cite{Sze39}.)

\begin{proposition}\label{Hermite_coeff}
Let us denote by $H_n(x) = \sum_{j=0}^n a_j^{(n)}  x^j$ the Hermite polynomial of degree $n$.
Then
\begin{align}\label{coeff_a}
        a_{n-k}^{(n)}=\begin{cases}(-1)^{k/2}\frac{\binom{n}{k}k!}{(k/2)!2^{k/2}} & k \ \mbox{ is even;} \\ 0 & k\ \mbox{ is odd.} \end{cases}
    \end{align}
\end{proposition}
\if 0
\begin{proof}
The proof is based on the well known fact that
$$ H(t,x)=\sum_{n=0}^\infty{H_n(x)\frac{t^n}{n!}}=e^{tx-t^2/2}. $$
Since the roots of the Hermite polynomials are lying symmetrically on the real line for odd $k$ one
has $\sum_{j=1}^n{(\xi_j^{(n)})^k}=0$, where $\xi_{j}^{(n)}$ denotes the $j^{th}$ largest root of $H_n$.
 Instead of $a_k^{(n)}$ we consider $a_{n-k}^{(n)}$, because in this case the parity of $k$ determines whether
 it will be different from zero,
or not.
Hence let us consider the exponential generating function $G_k$ of $(a_{n-k}^{(n)})_{n\geq 0}$
\begin{align}
    G_k(z)=\sum_{n=k}^{\infty}{a_{n-k}^{(n)}\frac{z^n}{n!}}\ .
\end{align}
Note that
\begin{align}\nonumber
    \sum_{k=0}^\infty{G_k(z)y^k}=\sum_{k=0}^\infty{\sum_{n=k}^\infty{a_{n-k}^{(n)}\frac{z^n}{n!}y^k}}=\sum_{n=0}^\infty{\widehat{H}_n(y)\frac{z^n}{n!}}\ ,
\end{align}
where $\widehat{H}_n(y)=\sum_{j=0}^n{a_{n-j}^{(n)}y^j}$. Since $\widehat{H}_n(y)=y^nH_n(y^{-1})$, one has that
\begin{align}
    \sum_{k=0}^\infty{G_k(z)y^k}=\sum_{n=0}^\infty{H_n(y^{-1})\frac{(yz)^n}{n!}}=H(yz,y^{-1})=e^{z-(yz)^2/2}\ ,
\end{align}
so
    \begin{align}\nonumber
        G_k(z)=\frac{1}{k!}\left.\frac{\partial^k}{\partial y^k}H(yz,\frac{1}{y})\right|_{y=0}
=\frac{1}{k!}e^z\left.\frac{\partial^k}{\partial y^k}e^{-\frac{(yz)^2}{2}}\right|_{y=0}\,.
    \end{align}
    But since
    \begin{align}\nonumber
        \frac{\partial^k}{\partial y^k}e^{-\frac{(zy)^2}{2}}&
=\frac{\partial^k}{\partial y^k}\sum_{n=0}^\infty{\frac{(-1)^n(yz)^{2n}}{n!2^n}}=
        \sum_{n:n\geq \lceil k/2 \rceil  }^\infty{\frac{(2n)!}{(2n-k)!}\frac{(-1)^n(yz)^{2n-k}z^k}{n!2^n}}
    \end{align}
    we have
    \begin{align}\nonumber
       G_k(z)&= \begin{cases} z^ke^z(-1)^{k/2}\frac{1}{(k/2)!2^{k/2}} & k \ \mbox{ is even;}\\   0 & k \ \mbox{ is odd.}
       \end{cases}
    \end{align}
    Using this formula one gets
    \begin{align}
        a_{n-k}^{(n)}=\left.\frac{\partial^n}{\partial z^n}G_k(z)\right|_{z=0}=\begin{cases}(-1)^{k/2}\frac{\binom{n}{k}k!}{(k/2)!2^{k/2}} & k \ \mbox{ is even;} \\ 0 & k\ \mbox{ is odd,} \end{cases}
    \end{align}
    hence the proof of this proposition is complete.
\end{proof}
\fi 
This concludes the proof of the theorem, as well.

\end{proof}
\fi

\section{The semi-circle law for the roots of the Hermite polynomials}\label{Hermite_SC}

Let us now consider the roots of the Hermite polynomials. Denote by $\xi^{(n)}_1, \dots , \xi^{(n)}_n$
its zeros and denote by $\mu_n = \frac{1}{n} \sum_{j=1}^n \delta_{\lambda_j^{(n)}}$ the empirical
distribution determined by the normalized roots, where $\lambda_j^{(n)}  =\frac{\xi_j^{(n)}}{2\sqrt{n}}$,
and by $M_n (k) = \sum_{j=1}^n \left(\xi^{(n)}_j\right)^k$ the sum of the $k^{{\rm th}}$
powers.
\if 0
The following theorem is known (see e.g. \cite{Ga87}), but we are going to present a short, direct proof of it which is based on
the recursive equations satisfied by the Hermite polynomials. \fi
\begin{theorem}[See \cite{Ga87}]\label{Hermite_limit}
The limit distribution of the empirical distribution of the roots of the Hermite polynomial $H_n$, as $n\rightarrow \infty$, is
given by the semicircle distribution, that is
\begin{align}
        \mu_n\xrightarrow[n\ra \infty]{w} \rho_{sc}(x)dx
   \end{align}
where ' $\xrightarrow[]{w}$ ' means weak convergence and $\rho_{sc}(x)=\frac2\pi \sqrt{1-x^2} \cdot  \mathbf{1}_{[-1,1]}(x)$ with $\textbf{1}_{[-1,1]}(x)$ denoting the indicator function of the set $[-1,1]\subset \R$.
\end{theorem}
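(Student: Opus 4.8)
The plan is to establish the weak convergence through the method of moments, which is available here because the semicircle law $\rho_{sc}$ is compactly supported and hence uniquely determined by its moments. Since the zeros of $H_n$ are symmetric about the origin, all odd moments of $\mu_n$ vanish, while the $2k$-th moment can be written as $\int x^{2k}\,d\mu_n(x) = M_n(2k)\big/\big(2^{2k} n^{1+k}\big)$. Because $\int_{-1}^{1} x^{2k}\rho_{sc}(x)\,dx = C_k/4^{k}$, the whole statement reduces to proving that $M_n(2k)/n^{1+k}\to C_k$ for each fixed $k$, i.e. that the rescaled power sums converge to the Catalan numbers.

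To get hold of the power sums $M_n(k)=\sum_{j}(\xi_j^{(n)})^{k}$ I would pass to the logarithmic derivative $g_n(x)=H_n'(x)/H_n(x)=\sum_{j=1}^{n}(x-\xi_j^{(n)})^{-1}$, whose Laurent expansion at infinity is $g_n(x)=\sum_{k\ge 0}M_n(k)\,x^{-k-1}$, so that controlling $g_n$ is equivalent to controlling all the $M_n(k)$ at once. The computational engine is a recursion for $g_n$ coming directly from the Hermite structure: the three-term recurrence $H_{n+1}(x)=xH_n(x)-nH_{n-1}(x)$ together with the differentiation identity $H_n'(x)=nH_{n-1}(x)$ gives $g_n=nH_{n-1}/H_n$ and hence the continued-fraction recursion
\begin{equation*}
g_{n+1}(x)=\frac{n+1}{\,x-g_n(x)\,},
\end{equation*}
which plays the role of Girko's self-consistent equation.

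The next step is to rescale to the window of size $2\sqrt n$ in which the roots live. Writing $s_n(w)=\tfrac1n\sum_j\big(w-\lambda_j^{(n)}\big)^{-1}$ for the Stieltjes transform of $\mu_n$ and using $\lambda_j^{(n)}=\xi_j^{(n)}/(2\sqrt n)$, one checks the identity $s_n(w)=\tfrac{2}{\sqrt n}\,g_n(2\sqrt n\,w)$. Substituting this into the recursion and using $\sqrt{n+1}/\sqrt n\to 1$, every subsequential limit $s$ of the sequence $(s_n)$ must satisfy the fixed point equation $s(w)=4\big/\big(4w-s(w)\big)$, equivalently $s(w)^2-4w\,s(w)+4=0$. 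Selecting the branch $s(w)=2\big(w-\sqrt{w^2-1}\big)$, which decays like $1/w$ at infinity and satisfies $\mathrm{Im}\,s(w)<0$ for $\mathrm{Im}\,w>0$, identifies $s$ as the Stieltjes transform of $\rho_{sc}$; expanding it at infinity reproduces exactly the moments $C_k/4^{k}$, equivalently the Catalan generating function $c(z^2)$ anticipated in the introduction. Convergence of the Stieltjes transforms then yields the weak convergence $\mu_n\xrightarrow{w}\rho_{sc}(x)\,dx$.

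I expect the main obstacle to be the rigorous passage to the limit in the nonlinear recursion rather than its formal derivation. One must show that $(s_n)$ actually converges, e.g. by a normal-families argument using that Stieltjes transforms of probability measures are uniformly bounded on compact subsets of $\mathbb{C}\setminus\R$, that every subsequential limit is again the Stieltjes transform of a probability measure, and that the fixed point equation has a unique such solution, the two algebraic branches being separated by the decay at infinity and by the sign condition $\mathrm{Im}\,s(w)<0$ on the upper half-plane. A second, more bookkeeping difficulty is controlling the argument mismatch between the scales $\sqrt n$ and $\sqrt{n+1}$ when iterating, together with ruling out escape of mass; the latter becomes automatic once the limiting moments are seen to be bounded by $1$, which forces the limit to be supported in $[-1,1]$ and hence moment-determinate, closing the method-of-moments argument.
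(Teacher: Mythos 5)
Your formal skeleton is correct, and after a change of variables it is exactly the paper's: your $g_n=H_n'/H_n=nH_{n-1}/H_n$ and $s_n(w)=\tfrac{2}{\sqrt n}g_n(2\sqrt n\,w)$ coincide (via $w=1/(2z)$, namely $s_n(1/(2z))=2z\,f_n(z)$) with the ratio $f_n(z)=\widehat{H}_{n-1}(z/\sqrt n)/\widehat{H}_n(z/\sqrt n)=\tfrac1n\mathcal{M}_n(z/\sqrt n)$ used in the paper; your continued-fraction recursion $g_{n+1}=(n+1)/(x-g_n)$ is the same identity as the paper's rewriting of (\ref{conj_rec}); and both arguments close with the method of moments, which is legitimate because the semicircle law is compactly supported. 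Your branch selection (decay like $1/w$, $\mathrm{Im}\,s(w)<0$ for $\mathrm{Im}\,w>0$) is a reasonable complex-analytic substitute for the paper's inequality $f_n(z)\le c(z^2)$, which is how the paper discards the wrong root of the quadratic.

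The genuine gap is the step you assert as if it were automatic: that \emph{every subsequential limit $s$ of $(s_n)$ satisfies the fixed point equation}. The recursion couples two different indices: it expresses $s_{n+1}(w)$ through $s_n\bigl(\sqrt{(n+1)/n}\,w\bigr)$. If $s_{n_k}\to s$ locally uniformly, normal families give (after a further extraction) only that $s_{n_k+1}\to\tilde s$ for some possibly different limit function $\tilde s$, and the recursion then yields $\tilde s(w)=4/\bigl(4w-s(w)\bigr)$, not $s(w)=4/\bigl(4w-s(w)\bigr)$. Nothing in your setup forces $\tilde s=s$. The argument mismatch $\sqrt{(n+1)/n}\,w$ versus $w$ is indeed harmless bookkeeping (uniform derivative bounds handle it), but the \emph{index} mismatch $n+1$ versus $n$ is not; it is where the real work lies, and it is exactly what the paper spends the bulk of its proof on: after the a priori bound (\ref{upperbound}), it uses monotonicity and convexity of $f_n$ to prove the self-consistency estimate (\ref{change_form}), i.e. $f_n^2(z)-f_n(z)f_{n-1}\bigl(\sqrt{(n-1)/n}\,z\bigr)=O(1/n)$, which replaces the $(n-1)$-st ratio by the $n$-th up to a vanishing error and only then produces a one-index fixed point equation for each limit point, followed by the monotone-map argument $f_n(z)\le c(z^2)$ to pin down the root. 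To repair your proof you need a substitute for this step, for instance: (a) interlacing of the zeros of $H_n$ and $H_{n+1}$, which gives $s_{n+1}-s_n\to 0$ locally uniformly and hence equality of consecutive subsequential limits; or (b) reading off Laurent coefficients from $g_{n+1}(x)\bigl(x-g_n(x)\bigr)=n+1$, which gives $M_{n+1}(m)=\sum_{k+l=m-2}M_{n+1}(k)M_n(l)$ and allows induction on the moment order, since the right-hand side involves only lower-order moments (the Catalan recursion then appears directly, with no subsequences at all); or (c) a genuine contraction argument (of Denjoy--Wolff type) for the M\"obius maps $\zeta\mapsto 4/(4w-\zeta)$ on the lower half-plane, uniform in the perturbations. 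Without one of these ingredients the proposal does not go through as stated.
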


\begin{proof}
To prove the weak convergence we apply the methods of moments. First for the sake of the reader we present a direct proof
of the following known lemma (see \cite{la98}) connecting the moments of the empirical distribution of the roots and the corresponding polynomial.
\begin{lemma}\label{powers}
        Let $p(x)=\sum_{j=0}^n{b_jx^j}$ be a monic polynomial (i.e. $b_n=1$) with real coefficients, let $\eta_1, \eta_2, \ldots , \eta_n$
		denote its roots, let $m(k)=\sum_{j=1}^n{\eta_j^k}$, and let $\mathcal{M}(z)=\sum_{k=0}^\infty{m(k)z^k}$
		denote the generating function of the sums of powers of the roots. Then
        \begin{align}
            \mathcal{M}(z)=-\frac{z\widehat{p}'(z)}{\widehat{p}(z)}+n
        \end{align}
        where $\widehat{p}(z)=\sum_{j=0}^n{b_{n-j}z^j}$ denotes the conjugate polynomial.
    \end{lemma}
    \begin{proof}
        According to the Newton identities one has
        \begin{align}\label{Newton}
            \sum_{j=0}^{k}{m(k-j)b_{n-j}}=(n-k)b_{n-k}
        \end{align}
The following computation is straightforward:
        \begin{align}
            \mathcal{M}(z)\widehat{p}(z)&=\sum_{l=0}^\infty{m(l)z^l}\sum_{j=0}^n{b_{n-j}z^j}=\sum_{k=0}^\infty{\sum_{j=0}^{k}m(k-j)b_{n-j}z^k}= \nonumber\\
            &=\sum_{k=0}^\infty{(n-k)b_{n-k}z^k}=n\widehat{p}(z)-z\widehat{p}'(z) \nonumber
        \end{align}
        hence
        \begin{align}
            \mathcal{M}(z)=-\frac{z\widehat{p}'(z)}{\widehat{p}(z)}+n \nonumber
        \end{align}
        so the proof is complete.
    \end{proof}

Let us return to the proof of the proposition. Introduce the notation
$\mathcal{M}_n(z):=\sum_{k=0}^\infty{M_n(k)z^k}$. We are going to show that
\begin{equation}\label{limit_mn}
\frac{1}{n}\mathcal{M}_{n}(z/\sqrt{n})\ra  \sum_{k=0}^\infty C_k z^{2k}\,, \quad \hbox{for} \ 0\leq z \leq \frac{1}{3}\,,
\end{equation}
where $C_k = \frac{1}{k+1} \left(\begin{array}{c} 2k \\ k \end{array}\right)$ is the $k^{th}$ Catalan number.

The proof of this claim will be based on the well-known recursive identities of the (probabilists') Hermite polynomials (similar as in \cite{Mar01}):
\begin{eqnarray*}
H_0(x)& = & 1 \nonumber \\
H_1(x)& = & x \nonumber \\
H_{n+1} (x) &=& x H_n (x) - n H_{n-1} (x) \\
\frac{d}{dx}H_n(x) &=&  n H_{n-1}(x)\,.
\end{eqnarray*}
Denoting by $\widehat{H}_n(x)= x^n H_n (\frac{1}{x})$ the conjugate polynomial
it can be easily checked that
\begin{eqnarray}
\widehat{H}_0(x)& = & 1 \\
\widehat{H}_1(x)& = & 1 \\
\widehat{H}_{n+1} (x) &=& \widehat{H}_n (x) - nx^2 \widehat{H}_{n-1} (x) \label{conj_rec}\\
\frac{d}{dx}\widehat{H}_n (x) &=& \frac{n}{x} \left(\widehat{H}_n(x) - \widehat{H}_{n-1}(x)\right)\label{deriv_rec}\,.
\end{eqnarray}
Since all the roots of $H_n$ are no greater in absolute value than $2\sqrt{n+\frac{1}{2}}$ (See \cite{Sze39} p. 131. Theorem 6.32.) we obtain that the conjugate polynomials
do not vanish in the interval $\left[-\frac{1}{3\sqrt{n}}, \frac{1}{3\sqrt{n}}\right]$.
Since $\widehat{H}_n (0) = 1$, they are in fact positive in that interval. This observation
combined with equation (\ref{conj_rec}) above implies that in this interval
\[
 \frac{\widehat{H}_{n-1}(x)}{\widehat{H}_{n}(x)}\leq \frac{1}{nx^2},
\]
consequently
\begin{align}\label{upperbound}
 \frac{\widehat{H}_{n-1}(z/\sqrt{n})}{\widehat{H}_n(z/\sqrt{n})}\leq \frac{1}{z^2},
\quad \hbox{for} \ | z | \leq \frac{1}{3}\,.
\end{align}
On the other hand using Lemma \ref{powers} equation (\ref{deriv_rec}) implies that
\[
 \mathcal{M}_n(z)=n \frac{\widehat{H}_{n-1}(z)}{\widehat{H}_n(z)},
\]
furthermore $\mathcal{M}_n(z)$ is a monotonically increasing (for $z \geq 0$), convex function (due to its definition and
the fact that $M_n(k)=0$ when $k$ is odd).

\noindent Now
$$  \frac{\widehat{H}_{n-1}(x)}{\widehat{H}_n(x)}
=1+\int_0^x{\frac{d}{dy}\frac{\widehat{H}_{n-1}(y)}{\widehat{H}_n(y)}\ dy}
>1+\frac x2\left.\frac{d}{dy}\frac{\widehat{H}_{n-1}(y)}{\widehat{H}_n(y)}\right|_{y=x/2}, $$
since $\frac{\widehat{H}_{n-1}(x)}{\widehat{H}_n(x)}$ is a positive, convex, monotonically increasing function on $\R_{\geq 0}$,
hence $$  \left.\frac{d}{dy}\frac{\widehat{H}_{n-1}(y)}{\widehat{H}_n(y)}\right|_{y=z/\sqrt{n}}\leq \left(\frac{1}{4z^2}-1 \right)\frac{\sqrt{n}}{z}, \quad \hbox{for} \ 0\leq z \leq \frac{1}{3} $$
which means that $$\left.\frac{d}{dy} \frac{\widehat{H}_{n-1}(y)}{\widehat{H}_{n}(y)}\right|_{y=z/\sqrt{n}}=O(\sqrt{n}).$$ Straightforward computation gives that
$$ \frac{d}{dx}\frac{\widehat{H}_{n-1}}{\widehat{H}_n}(x)=\frac n x \left(\frac{\widehat{H}_{n-1}^2(x)}{\widehat{H}^2_n(x)}-\frac{\widehat{H}_{n-2}(x)}{\widehat{H}_n(x)} \right)+\frac 1 x \left(\frac{\widehat{H}_{n-2}(x)}{\widehat{H}_n(x)}-\frac{\widehat{H}_{n-1}(x)}{\widehat{H}_n(x)} \right) $$
hence
 \begin{align*}\left.\frac{d}{dy}\frac{\widehat{H}_{n-1}(y)}{\widehat{H}_n(y)}\right|_{y=z/\sqrt{n}}&=\frac {n\sqrt{n}}{ z} \left(\frac{\widehat{H}_{n-1}^2(z/\sqrt{n})}{\widehat{H}^2_n(z/\sqrt{n})}-\frac{\widehat{H}_{n-2}(z/\sqrt{n})}{\widehat{H}_n(z/\sqrt{n})} \right)+\\&+\frac {\sqrt{n}}{z} \left(\frac{\widehat{H}_{n-2}(z/\sqrt{n})}{\widehat{H}_n(z/\sqrt{n})}-\frac{\widehat{H}_{n-1}(z/\sqrt{n})}{\widehat{H}_n(z/\sqrt{n})} \right)\end{align*}
and so

\begin{equation}\label{change_form}
\frac{\widehat{H}_{n-1}^2(z/\sqrt{n})}{\widehat{H}^2_n(z/\sqrt{n})}-\frac{\widehat{H}_{n-2}(z/\sqrt{n})}{\widehat{H}_n(z/\sqrt{n})}=O\left(\frac{1}{n}\right), \quad \hbox{for} \ 0\leq z \leq \frac{1}{3}\,.
\end{equation}
Now let $f_n(z):=\frac{\widehat{H}_{n-1}(z/\sqrt{n})}{\widehat{H}_{n}(z/\sqrt{n})}$, then equation (\ref{conj_rec}) implies that
$$ 1=f_n(z)-\frac{n-1}{n}z^2f_n(z)f_{n-1}\left( \sqrt{\frac{n-1}{n}}\cdot z\right) $$
and from (\ref{change_form}) it follows that
\begin{align*} f_n^2(z)-f_n(z)f_{n-1}\left( \sqrt{\frac{n-1}{n}}\cdot z\right)&=\frac{\widehat{H}_{n-1}^2(z/\sqrt{n})}{\widehat{H}^2_n(z/\sqrt{n})}-\frac{\widehat{H}_{n-2}(z/\sqrt{n})}{\widehat{H}_n(z/\sqrt{n})}=\\&=O\left(\frac{1}{n}\right). \end{align*}
Therefore if for some fixed $z$ in the interval above $h(z)$ is a limit point of the sequence $f_n(z)$ then it satisfies the
following equation:
\begin{align}\label{fixp_eq} 1=h(z)-z^2h(z)^2 .\end{align}
In fact
\begin{align*}1&=f_n(z)-\frac{n-1}{n}z^2f_n^2(z)
- \frac{n-1}{n}z^2\cdot\left[f_n(z)f_{n-1}\left( \sqrt{\frac{n-1}{n}}\cdot z\right)-f_n^2(z)\right] \\
1&=f_n(z)-\frac{n-1}{n}z^2f_n^2(z)+ O\left( \frac 1 n \right).
\end{align*}

Introducing the notation $c(z) = \sum_{k=0}^\infty C_k z^{2k}$ the usual computation gives that
\begin{eqnarray*}
1- 2z c(z) &=& 1- \sum_{k=0}^\infty \frac{z^{k+1}}{(k+1)!} 2^{k+1} (2k-1)!! \\
&= &
1- \sum_{k=0}^\infty z^{k+1} 2^{2k+2} (-1)^k \left(\begin{array}{c} \frac{1}{2} \\ k+1\end{array}\right) =
\sqrt{1-4z}\,.
\end{eqnarray*}
\\
Thus $c(z)=\frac{1-\sqrt{1-4z}}{2z}$ which is the smaller solution of the equation $1=c(z)-zc(z)^2$, consequently setting 
$h(z)=c(z^2)$ we arrive at (\ref{fixp_eq}) (for more details see \cite{koshy2008catalan} p. 27-28).

Since acording to (\ref{upperbound}) the sequence $f_n(z)$ is uniformly bounded, in order to prove
the convergence of the whole sequence it is enough to prove that
$$ f_n(z)\leq c(z^2) \ \ \ \hbox{for  }  n\geq 1 \hbox{ and  } 0 \leq z \leq \frac{1}{3}\,, $$ 
implying that $c(z^2)$ is the only limit point of this sequence.

 For $n=1$ one has $f_1(z)=1\leq c(z^2)$. Equation (\ref{conj_rec}) implies that
\begin{align*} 1&=\frac{\widehat{H}_{n-1}}{\widehat{H}_n}\left(\frac{z}{\sqrt{n-1}}\right)
-z^2 \frac{\widehat{H}_{n-2}}{\widehat{H}_n}\left(\frac{z}{\sqrt{n-1}}\right)=\\
&= f_n\left(\sqrt{\frac{n}{n-1}} \cdot z\right)-z^2f_n\left(\sqrt{\frac{n}{n-1}}\cdot z\right)f_{n-1}(z).\end{align*}
Let us look at the following map $\xi\mapsto \eta(\xi)$, where
$$ 1=\eta(\xi)-z^2\eta(\xi)\xi, $$
and $z\in[0,1/3]$ is arbitrary, but fixed. Note that the fixed points of $\eta(\xi)$ are the same as the solutions to (\ref{fixp_eq}).
 Expressing $\eta$ in term of $\xi$ we get
\begin{align*}
    \eta(\xi)=\frac{1}{1-z^2\xi}.
\end{align*}
Observe that $\eta$ is a strictly monotonically increasing function on the set $\xi < \frac{1}{z^2}$ with $\eta(0)=1\leq c(z^2)$, and so $\eta(\xi)\leq c(z^2)$ if $\xi\leq c(z^2)$. Now put $\xi=f_{n-1}(z)$, then $\eta=f_{n}(\sqrt{n/(n-1)}\cdot z)\leq c(z^2)$. Since $f_n$ is monotonically increasing we have
$$ f_n(z)\leq f_{n}\left(\sqrt{\frac{n}{n-1}}\cdot z\right)\leq c(z^2)  $$as well.
Thus the only accumulation point of $(f_n(z))_{n\in \N}$ is $c(z^2)$. Remember that $\frac 1 n \mathcal{M}_n(z/\sqrt{n})=f_n(z)$,
hence the proof of (\ref{limit_mn}) is complete.

Now the convergence of the power series on the interval $[0, \frac{1}{3}]$ implies the convergence of the
coefficients, so
\[
\frac{1}{2^kn^{\frac{k}{2}+1}} M_n (k) \rightarrow \begin{cases} \frac{C_{k/2}}{2^k} & \hbox{k is even}\,, \\
0 & \hbox{k is odd}\,.
\end{cases}
\]
Shortly, they tend to the corresponding moments of the semicircle distribution since$$ \int_\R{x^k\rho_{sc}(x)dx}= \begin{cases} \frac{C_{k/2}}{2^k} & \hbox{k is even}\,, \\
0 & \hbox{k is odd}. \end{cases}$$ This concludes the proof of the theorem.

\end{proof}
\section{The sum of the $k^{th}$ power of the roots of Hermite polynomials}

In this section we are going to prove a stronger statement than the one in the previous section, namely:

\begin{theorem}\label{Hermite_moment} 
$M_n(k)$ is a polynomial in $n$, where $M_n(k) = 0$ when $k$ is odd, while
\[
\deg_n M_n(k) = \frac{k}{2} + 1
\]
when $k$ is even. In these cases the coefficient of $n^{k/2 + 1}$ in $M_n(k)$ is given by the Catalan number $C_{k/2}$.
In particular,
    \begin{align} M_n(k)=\begin{cases}
        n^{k/2+1} C_{k/2} + f(n)\,, & \mbox{ k is even; } \\ 0\,, & \mbox{ k is odd, }
    \end{cases} \end{align}
    where $C_{k/2}=\frac{\binom{k}{k/2}}{k/2+1}$ and $f$ is a polynomial of degree at most $k/2$. 
\end{theorem}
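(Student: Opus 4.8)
The plan is to use the Newton-identity recursion from Lemma~\ref{powers} to set up a direct induction on $k$, tracking simultaneously the degree in $n$ and the leading coefficient of $M_n(k)$ as a polynomial in $n$. Since $H_n$ has only even-degree gaps (its roots are symmetric about $0$), the odd moments vanish identically, so I only need to treat even $k$. The key structural fact I would extract first is a clean recurrence for $M_n(k)$ itself. From $\mathcal{M}_n(z) = n\,\widehat{H}_{n-1}(z)/\widehat{H}_n(z)$ (established in the proof of Theorem~\ref{Hermite_limit}) together with the recursion $\widehat{H}_{n+1}(x) = \widehat{H}_n(x) - nx^2\widehat{H}_{n-1}(x)$, one obtains a functional relation among $\mathcal{M}_n$, $\mathcal{M}_{n-1}$ and powers of $z$; alternatively, and more cleanly, I would derive a power-sum recurrence directly from the three-term recurrence $H_{n+1}(x) = xH_n(x) - nH_{n-1}(x)$ by examining how the roots' power sums transform. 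Concretely, if $\xi^{(n)}_j$ are the roots of $H_n$, then the identity $\xi H_n(\xi) = n H_{n-1}(\xi)$ at a root $\xi = \xi^{(n+1)}_j$ of $H_{n+1}$ lets me relate $M_{n+1}(k)$ to lower moments and to the matching-polynomial/combinatorial count of closed walks.

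Concretely I would proceed as follows. First, establish by induction on $k$ that $M_n(k)$ is a polynomial in $n$ of degree exactly $k/2+1$; the base cases $M_n(0)=n$ (degree $1$) and $M_n(2)=\sum_j (\xi^{(n)}_j)^2 = \mathrm{tr}(\text{companion-type sum})$, which equals $\binom{n}{2}\cdot 2 = n(n-1)$ by Newton's identity applied to $H_n$ (degree $2 = k/2+1$), fix the pattern. For the inductive step I would use the Newton identity \eqref{Newton} in the form $M_n(k) = -\sum_{j=1}^{k-1} M_n(k-j)\,b^{(n)}_{n-j} - k\,b^{(n)}_{n-k}$, where $b^{(n)}_{n-j}$ are the explicit Hermite coefficients $b^{(n)}_{n-2i} = (-1)^i\binom{n}{2i}(2i-1)!!$. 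The crucial observation is that $b^{(n)}_{n-2i}$ is a polynomial in $n$ of degree exactly $2i$ with leading term $\frac{(-1)^i(2i-1)!!}{(2i)!}\,n^{2i} = \frac{(-1)^i}{2^i i!}\,n^{2i}$. Feeding the inductive hypotheses $\deg_n M_n(k-2i) = (k-2i)/2 + 1 = k/2 + 1 - i$ into the sum, each term $M_n(k-2i)\,b^{(n)}_{n-2i}$ has degree $(k/2+1-i) + 2i = k/2+1+i$, so the degrees \emph{grow} with $i$; this forces a careful cancellation argument, because naively the highest-$i$ terms would overshoot the claimed degree $k/2+1$.

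This cancellation is exactly where the main obstacle lies, and it is also where the Catalan number must emerge. I expect that the leading coefficients of the individual summands, when assembled, must cancel down to leave total degree $k/2+1$, and that the surviving coefficient of $n^{k/2+1}$ is precisely $C_{k/2}$. Rather than verifying this cancellation coefficient-by-coefficient, I would instead bootstrap from Theorem~\ref{Hermite_limit}: the convergence \eqref{limit_mn} already shows that $\frac{1}{2^k n^{k/2+1}}M_n(k) \to C_{k/2}/2^k$, hence once I know $M_n(k)$ is a polynomial in $n$, its degree is \emph{at most} $k/2+1$ and the coefficient of $n^{k/2+1}$ is exactly $C_{k/2}$ (in particular nonzero, so the degree is exactly $k/2+1$). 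Thus the genuinely new content to prove here is only the polynomiality of $M_n(k)$ in $n$ together with the degree upper bound $\deg_n M_n(k) \le k/2+1$; the value of the leading coefficient and the exactness of the degree then follow for free from the semicircle limit. I would therefore organize the proof so that the induction via \eqref{Newton} carries the upper bound $\deg_n M_n(k)\le k/2+1$ (the delicate part being to show no summand of degree $>k/2+1$ survives, which reduces to the vanishing of an alternating sum of the form $\sum_i (-1)^i \binom{?}{i}(\cdots)$ — a routine but essential combinatorial identity), and then invoke Theorem~\ref{Hermite_limit} to pin down the leading coefficient as $C_{k/2}$. The remainder polynomial $f(n)$ of degree at most $k/2$ is then simply $M_n(k) - n^{k/2+1}C_{k/2}$.
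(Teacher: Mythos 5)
The step that would fail is exactly the one you defer: your organization makes the induction via (\ref{Newton}) carry the upper bound $\deg_n M_n(k)\le k/2+1$, and you dismiss the required cancellation of all terms of degree $>k/2+1$ as ``a routine but essential combinatorial identity.'' It is not routine, and with your inductive hypothesis it cannot even be formulated. The summands $M_n(k-2i)\,b^{(n)}_{n-2i}$ have degrees $k/2+1+i$, so cancellation must occur at every degree $k/2+2,\dots,k$; but the coefficient of $n^{k/2+1+i}$ receives, from every summand of index $i'>i$, its coefficient at depth $i'-i$ below its own top degree -- that is, it involves the \emph{subleading} coefficients of the lower moments, while your induction carries only their degrees and leading coefficients. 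Already for $k=4$: $M_n(4)=-M_n(2)b^{(n)}_{n-2}-4b^{(n)}_{n-4}$, both summands have degree $4$, and verifying that the result $2n^3-5n^2+3n$ has degree $3$ requires the full polynomial $M_n(2)=n^2-n$, not just its leading term. This is precisely the difficulty the paper's proof is engineered to avoid: it solves the Newton system by Cramer's rule, writes $M_n(2k)=A(k,1)$ as an explicit determinant, and after a column manipulation obtains the recursion (\ref{recursion_A}), $A(k,l)=\sum_{i=1}^{l+1}A(k-1,i)(x-i)_{l-i+1}$, in which every summand has \emph{positive} leading coefficient, so degrees add with no cancellation and $\deg A(k,l)=k+l$ follows by a trivial induction (Lemma \ref{degA}).

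The good news is that your own bootstrap observation makes the problematic step unnecessary, so the gap is removable. Your Newton induction does give polynomiality of $M_n(k)$ in $n$ (each $b^{(n)}_{n-2i}=(-1)^i\binom{n}{2i}(2i-1)!!$ is a polynomial in $n$, vanishing automatically at integers $n<2i$, so the recursion propagates polynomiality without any degree bookkeeping); and once polynomiality is known, the limit $M_n(k)/n^{k/2+1}\to C_{k/2}$ extracted from (\ref{limit_mn}) is finite and nonzero, which by itself forces $\deg_n M_n(k)=k/2+1$ and top coefficient $C_{k/2}$ -- no separate degree bound is needed, so the internal inconsistency in your write-up should be resolved by deleting the cancellation argument, not by completing it. With that deletion your proof is correct and genuinely different from the paper's: the paper deliberately stays self-contained, identifying the leading coefficient of $A(k,1)$ with the number of lattice paths in a directed graph and recovering the Catalan recursion $d_{k+1}=\sum_{i=0}^k d_i d_{k-i}$ (Lemma \ref{paths}); indeed the paper's remark explicitly notes that the coefficient could instead be read off from Theorem \ref{Hermite_limit}, exactly as you propose. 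What the heavier machinery buys is independence from Theorem \ref{Hermite_limit} and extra mileage: the same determinant/path apparatus yields the second-highest coefficient $s_k=-\bigl(2^{2k-1}-\binom{2k-1}{k}\bigr)$ (Proposition \ref{SecondC}), which a bootstrap from the semicircle limit cannot reach.
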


\if 0 {\bf A T\'etel azt \'all{\'\i}tja, hogy $n\geq k$ eset\'en teljes\"ul valami. A bizony{\'\i}t\'asb\'ol viszont err\H ol a felt\'etelr\H ol semmi 
nem der\"ul ki. Felt\'etlen r\'a kellene mutatni arra, hogy hol van sz\"uks\'eg erre a felt\'etelre. \textit{A (16)-os egyenlet ut\'an besz\'urtam egy mondatot, amibenl eml\'it\'est tettem a $n\geq k$ felt\'etel fontoss\'ag\'ar\'ol.} } \fi
 
Before the proof of theorem \ref{Hermite_moment} we state a well known result without proof:
\begin{proposition}[See \cite{Sze39} p. 106. eqn. 5.5.4.]\label{Hermite_coeff}
Let us denote by $H_n(x) = \sum_{j=0}^n a_j^{(n)}  x^j$ the Hermite polynomial of degree $n$.
Then
\begin{align}\label{coeff_a}
        a_{n-k}^{(n)}=\begin{cases}(-1)^{k/2}\frac{\binom{n}{k}k!}{(k/2)!2^{k/2}} & k \ \mbox{ is even;} \\ 0 & k\ \mbox{ is odd.} \end{cases}
    \end{align}
\end{proposition}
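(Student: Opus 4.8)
The plan is to obtain the coefficient formula directly from the three-term recurrence $H_{n+1}(x)=xH_n(x)-nH_{n-1}(x)$ together with the initial data $H_0=1$, $H_1=x$ recorded in Section~\ref{Hermite_SC}, by induction on $n$. Write $c(n,k):=a_{n-k}^{(n)}$ for the coefficient of $x^{n-k}$ in $H_n$, with the convention that $c(n,k)=0$ whenever $k<0$ or $k>n$. Comparing the coefficients of $x^{(n+1)-k}$ on the two sides of the recurrence --- multiplication by $x$ carries $x^{n-k}$ to $x^{(n+1)-k}$, while in $H_{n-1}$ the monomial $x^{(n+1)-k}$ has coefficient $c(n-1,k-2)$ because $(n-1)-(k-2)=(n+1)-k$ --- gives the single recursion
\begin{equation*}
c(n+1,k)=c(n,k)-n\,c(n-1,k-2).
\end{equation*}

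First I would treat odd $k$: the recursion relates $c(\cdot,k)$ only to $c(\cdot,k)$ and $c(\cdot,k-2)$, so it preserves the parity of $k$, and since $H_0,H_1$ have $c(0,k)=c(1,k)=0$ for all odd $k$, induction gives $c(n,k)=0$ for every odd $k$. For even $k=2m$ I would verify that the claimed value, written with falling factorials as $c(n,2m)=(-1)^m\,n^{\underline{2m}}/(m!\,2^m)$ where $n^{\underline{j}}:=n(n-1)\cdots(n-j+1)$ --- this is exactly $(-1)^m\binom{n}{2m}(2m)!/(m!\,2^m)$, the asserted value, since $\binom{n}{2m}(2m)!=n^{\underline{2m}}$ --- satisfies the recursion. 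Substituting and cancelling the common factor $(-1)^m/(m!\,2^m)$ reduces the inductive step to the identity
\begin{equation*}
(n+1)^{\underline{2m}}=n^{\underline{2m}}+2m\,n\,(n-1)^{\underline{2m-2}},
\end{equation*}
which follows at once by pulling out $n^{\underline{2m-1}}$: one has $(n+1)^{\underline{2m}}=(n+1)\,n^{\underline{2m-1}}$, $n^{\underline{2m}}=(n-2m+1)\,n^{\underline{2m-1}}$, and $n\,(n-1)^{\underline{2m-2}}=n^{\underline{2m-1}}$, whence the right-hand side equals $\bigl[(n-2m+1)+2m\bigr]\,n^{\underline{2m-1}}=(n+1)\,n^{\underline{2m-1}}$.

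Working with falling factorials makes the boundary behaviour automatic, since $n^{\underline{j}}=0$ once $j>n$: the vanishing $c(n,k)=0$ for $k>n$, the value $c(n,0)=1$ (the monic leading coefficient), and the degenerate range $k>n+1$ in the inductive step then need no separate argument. Thus the induction rests only on the two base cases $n=0,1$, checked directly, and the step above, which feeds the formula at levels $n-1$ and $n$ into level $n+1$. The sole computational point is the falling-factorial identity, whose factorisation is immediate, so I anticipate no real obstacle beyond keeping the index shift $k\mapsto k-2$ and the vanishing conventions consistent. As a shorter alternative one could read the coefficients off the exponential generating function $\sum_{n\ge0}H_n(x)\,t^n/n!=e^{tx-t^2/2}$ by multiplying the expansions of $e^{tx}$ and $e^{-t^2/2}$ and extracting the coefficient of $t^n$, but that route first demands establishing the generating function, whereas the recurrence is already in hand.
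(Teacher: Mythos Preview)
Your argument is correct: the coefficient recursion $c(n+1,k)=c(n,k)-n\,c(n-1,k-2)$ is read off correctly from the three-term recurrence, the parity observation disposes of odd $k$, and the falling-factorial identity you isolate,
\[
(n+1)^{\underline{2m}}=n^{\underline{2m}}+2m\,n\,(n-1)^{\underline{2m-2}},
\]
is verified exactly as you say. The boundary conventions are handled cleanly by the automatic vanishing of $n^{\underline{j}}$ for $j>n$.

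As for comparison: the paper does not actually prove this proposition --- it is stated explicitly \emph{without proof}, with a pointer to Szeg\H{o}. (The \LaTeX\ source contains a commented-out proof via the exponential generating function $\sum_{n\ge 0}H_n(x)t^n/n!=e^{tx-t^2/2}$, which is precisely the alternative you mention at the end; that version was evidently excised from the final text.) Your recurrence-and-induction route is arguably the more economical choice here, since the recurrence $H_{n+1}=xH_n-nH_{n-1}$ is already invoked in Section~\ref{Hermite_SC} and no further machinery needs to be introduced, whereas the generating-function argument requires first establishing the closed form for $H(t,x)$.
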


\begin{proof} (of Theorem \ref{Hermite_moment}).
 First let us note that $a^{(n)}_{n-k} = 0$ when $k$ is an odd number or $k>n$, thus by induction we obtain
     that $M_n(k)=0$ for $k= 1, 3, \dots$. Using this fact let us write Newton's identities (\ref{Newton}) in the following matrix form:
    \begin{align}
        \begin{bmatrix}
            1  & 0 & \ldots & \ldots & 0 \\
            a_{n-2}^{(n)} & 1 & 0&  \ldots & 0 \\
             & &  \ddots & & \vdots \\
	& & &\ddots & 0 \\
            a_{n-2(k-1)}^{(n)} &  \ldots &  \ldots & a_{n-2}^{(n)} & 1
        \end{bmatrix} \begin{bmatrix}
            M_n(2) \\ M_n(4) \\ \vdots \\  \vdots \\ M_n(2k)
        \end{bmatrix}=\begin{bmatrix}
            -2a_{n-2}^{(n)} \\ -4 a_{n-4}^{(n)} \\ \vdots  \\ \vdots \\  -2ka_{n-2k}^{(n)}
        \end{bmatrix}.
    \end{align}
Since the determinant of the matrix standing on the left hand side is $1$ we obtain that
    \begin{align}\label{matr}
        M_n(2k)=\det \begin{bmatrix}
             1  & 0 & 0 &\ldots &  -2a_{n-2}^{(n)} \\
            a_{n-2}^{(n)} & 1 & 0& \ldots & -4a_{n-4}^{(n)}  \\
             & &  \ddots & \\
             & & &  \ddots \\
            a_{n-2(k-1)}^{(n)} & a_{n-2(k-2)}^{(n)} & \ldots & a_{n-2}^{(n)} & -2ka_{n-2k}^{(n)}
        \end{bmatrix}.
    \end{align}
In order to compute the determinant above let us  introduce the following function of variable $x$:
\begin{align}
        A(k,l):=\det \begin{bmatrix}
            1 & 0 & 0 & \ldots & (x)_{l+1} \\
            -\frac{x(x-1)}{2} & 1 & 0 & \ldots & -\frac{(x)_{l+3}}{2} \\
            & \ddots& \ddots & & \\
            & & &  1 & \vdots\\
            \frac{(-1)^{k-1}(x)_{2(k-1)}}{2^{k-1}(k-1)!} & \ldots & \ldots & -\frac{x(x-1)}{2} & \frac{(-1)^{k-1}(x)_{l+2(k-1)+1}}{(k-1)!2^{k-1}}
        \end{bmatrix}
    \end{align}
for $k\geq 2, l\geq 1$,
    where $(x)_l=x(x-1)\cdots (x-l+1)$ and $(x)_0 = 1$. \\
Observe that $M_{n} (2k) = A (k, 1)$ with $x=n$. Multiply the first column by $(x)_{l+1}$ and subtract it from the last one, then the $j^{th}$ element  $(j\geq 2)$ of the last column is given as:
\begin{align}
        &\frac{(-1)^{j-1}}{2^{j-1}(j-1)!} (x)_{l+2(j-1)+1} -\frac{(-1)^{j-1}}{2^{j-1}(j-1)!} (x)_{l+1} (x)_{2(j-1)}
= \nonumber \\
        &=\frac{(-1)^{j-1}}{2^{j-1}(j-1)!}(x)_{2(j-1)} [(x-2j+2)_{l+1} - (x)_{l+1}]= \nonumber \\
        &=\frac{(-1)^{j-1}}{2^{j-1}(j-1)!}(x)_{2(j-1)}\left[-2(j-1)\right]\sum_{h=0}^{l}\left[(x-2j+2)_{h} \times (x-h-1)_{l-h}\right]
\nonumber \\
       &= \frac{(-1)^{j-2}}{2^{j-2}(j-2)!}\sum_{h=0}^l \left[(x)_{2(j-1)+h} \times (x-h-1)_{l-h}\right]\,.
  \nonumber
  \end{align}
    due to the fact that
    \begin{align}
        \prod_{j=0}^l{\alpha_j}-\prod_{j=0}^l{\beta_j}=\sum_{h=0}^{l} \left(\prod_{j:j<h} \alpha_j (\alpha_h-\beta_h) \prod_{j:j>h} \beta_j  \right)\,.
\nonumber
    \end{align}
Let us observe that now the first element in the last column is zero, while all the other elements can be written as sums
with $l+1$ elements, where in the $i^{\rm{th}}$ summand every element is multiplied by the same factor $(x-h-1)_{l-h}$.
Introducing the notation $i=h+1$ we obtain that for $k\geq 3$, $l\geq 1$
    \begin{align}\label{recursion_A} 
        A(k,l)=\sum_{i=1}^{l+1} A(k-1,i) (x-i)_{l-i+1}.
    \end{align}
For $k=2$, $l\geq 1$
    \begin{align}
        A(2,l)&=\det \begin{bmatrix}
           1 & x(x-1)\cdots (x-l) \\ -\frac{x(x-1)}{2} & -\frac{1}{2}x(x-1)\cdots (x-l-2)
        \end{bmatrix}  \nonumber\\
&=\frac{x(x-1)\cdots (x-l)}{2}\left(x(x-1)-(x-l-1)(x-l-2) \right) \nonumber \\
&=\frac{x(x-1)\cdots (x-l)}{2}((2l+2)x -(l+1)(l+2)) \nonumber
    \end{align}
Let us observe that
$A(2, l)$ can also be written as
\[
A(2, l) = \sum_{i=1}^{l+1} (x)_{i+1} (x-i)_{l-i+1}\,,
\]
hence with the notations $A(1, l) = (x)_{l+1}$, $A(0, 1) = x$ and $A(0, l ) = 0$ for $l\geq 2$ we might extend the validity of the formula (\ref{recursion_A}) for $k=2$ and $k=1$ as well.
\begin{lemma}\label{degA} 
$ \deg A(k,l) =k+l $, for $k\geq 1, l\geq 1$\,.\end{lemma}
\textit{Proof of the lemma:} 
Trivially $\deg A(1, l) = l+1$ and the highest degree coefficients are positive.  
Suppose the claim above is true for $k-1$ and all $l\geq 1$, then
    \begin{align}
       \deg A(k,l)&=\deg \left(\sum_{i=1}^{l+1}\prod_{j=i}^l(x-j) A(k-1,i)\right)  =  k+l\, ,
    \nonumber
 \end{align}
because by induction the highest degree coefficients of $A(k-1,l)$  and that of the multipliers $(x-i)_{l+1-i}$
for $l=1, \dots , l+1$ are positive.
 This concludes the proof of Lemma \ref{degA}. $\Box$ \\

In particular $\deg_n M_n (2k) = \deg A(k, 1) = k+1$. For example, when $k=2,4,6$ and $x=n$ it is easy to see that
\begin{eqnarray} \label{mn2}M_n(2)&=&-2a_{n-2}= n^2-n=C_1\cdot n^2-n \\
\label{mn4} M_n(4)&=&2n^3-5n^2+3n=C_2\cdot n^3-5n^2+3n \\ 
M_n(6)&=&5n^4-22n^3+32n^2-15n=C_3\cdot n^4-22n^3+32n^2-15n \label{mn6}
 \end{eqnarray}

\begin{remark}Let us point out that from this it follows that $\lim_{n\rightarrow \infty} M_n(2k)/n^{k+1}$ equals to
the leading coefficient of $A(k, 1)$, consequently Theorem \ref{Hermite_limit} implies that this has to be $C_k$.
But to provide 
a self-contained proof we show that it is possible to determine this leading coefficient using a simple graph theoretic argument. 
\end{remark}
Thus, we are going to prove that the leading coefficient -- that is the coefficient of $x^{k+1}$ -- in $A(k, 1)$ is 
$C_k=\binom{2k}{k}/(k+1)$. 

Since in the recursive formula for $A(k, l)$ the factors for $A(k-1, i)$ are with leading coefficient one, the leading
coefficient of $A(k, l)$ can be obtained as the sum of that of $A (k-1, 1), \dots , A(k-1, l+1)$. Applying now the recursive
formula for the elements $A(k-1, i)$ and so on, we obtain that the leading coefficient of $A(k,1)$ is given by the number
of $A(1,t)$ terms in the representation of $A(k,1)$ by these elements.
This question can be translated in the following graph theoretical question:\\
    Let us consider the following (directed) graph  $G:=((\Z_{\geq 0})^2,E)$: For $a:=(i_1,j_1),b:=(i_1,j_2)\in (\Z_{\geq 0})^2$ there is an edge from $a$ to $b$, i.e. $(a,b)\in E$ if and only if $i_2=i_1+1$ and $j_2=j_1-1+h$ for $h\geq 0$. Let $a(j)$ denote $a$'s $j^{th}$ coordinate for $j=1,2$. The number of simple (directed) paths from the origin to $(k,0)$ is exactly the coefficient of $x^{k+1}$ in $A(k,1)=M_n(2k)$.
It can be checked easily that for $k=1$ it is $1$, for $k=2$ it is 2, for $k=3$ it is 5. \\
    \begin{lemma} \label{paths} In the graph G the number of simple paths from the origin to $(k,0)$ is exactly $C_k$. \end{lemma}
    \textit{Proof of Lemma \ref{paths}:} By induction on $k$. Denote by $d_{k+1}$ the number of simple paths from the origin to $(k+1,0)$, define $d_0:=1$, denote by $o$ the origin and denote by $\mathcal{P}_{k+1}$ the collection of (directed) simple paths from $(0,0)$ to $(k+1, 0)$, i.e.
    \begin{align*} \mathcal{P}_{k+1}:=\{ & (o,a_1,a_2,\ldots,a_{k+1}) | a_i\in (\Z_{\geq0})^2,\\ & \ a_i(1)=i,\ (a_{i-1},a_{i})\in E,\ 1\leq i \leq k+1, a_{k+1}(2)=0 \ \}. ,\end{align*}
     For any path $P=(o,\ldots,a_{m})$ set
     $$ t(P):=\inf\{j\geq 1| a_j=(j,0) \} $$
     and let $\mathcal{P}_{k+1}(i):=\{P\in\mathcal{P}_{k+1} | t(P)=i+1 \}$. Note that $t(P)=i+1$ means that the first node of the path $P$ whose second coordinate is 0 and differs from the origin is $a_{i+1}$.
     It is easy to see that $\mathcal{P}_{k+1}=\bigcup\limits_{0\leq i \leq k}^*\mathcal{P}_{k+1}(i),$ hence $|\mathcal{P}_{k+1}|=\sum_{i=0}^k|\mathcal{P}_{k+1}(i)|.$
     We want to show that
\[
|\mathcal{P}_{k+1}(i)|=|\mathcal{P}_{i}||\mathcal{P}_{k-i}|=d_{i}d_{k-i}\,.
\]
 Note that $\mathcal{P}_0=\{(o)\}$ and $\mathcal{P}_1=\{\left(o,(1,0)\right)\}$. Given two paths $P_1=(o,a_1,\ldots,a_{i})$ $\in \mathcal{P}_{i}$ and $P_{2}=(o,b_1,\ldots,b_{k-i})\in\mathcal{P}_{k-i} $ one can make a path $P\in \mathcal{P}_{k+1}(i)$ as follows: let us construct $P=(o,c_1,\ldots,c_{k+1})$ in such a way that $c_j(1):=a_j(1)$, $c_j(2):=a_j(2)+1$ for $1\leq j \leq i$ and $c_{j}(1):=b_{j-i-1}(1)+i+1$ , $c_j(2):=b_{j-i-1}(2)$ for $i <  j\leq k+1 $. Remember that $b_0=o$. It is trivial that $c_{k+1}=(k+1,0)$ and due to the definition of the graph $(c_j,c_{j+1})\in E$ for $0\leq j\leq k+1$. Since $a_j(2)\geq 0$ for $1\leq j \leq i$, so $c_j(2)\geq 1$ for these indices, while
$c_{i+1}=(i+1,0)$ thus we obtain that $t(P)=i+1$, therefore $P\in \mathcal{P}_{k+1}(i)$.
     Now take a path $P=(o,c_1,\ldots,c_{k+1})\in \mathcal{P}_{k+1}(i)$. Let $P_1=(o,a_1,\ldots,a_i)$ be defined by $a_j(1):=c_j(1)$ and $a_j(2):=c_j(2)-1$ for $1\leq j\leq i$. Since $t(P)=i+1$, one has  (for $i\geq 1$) that $c_j\geq 1$ for $1\leq j\leq i$, therefore $a_j(2)\geq 0$ for $1\leq j \leq i$. Furthermore, for $i\geq 1$, $c_{i}=(i,1)$ -- because of $c_{i+1}=(i+1,0)$ -- implying that $a_i=(i,0)$. Obviously $P_1$ is a valid path (due to the structure of the graph) and its last node is $(i,0)$.
     The number of such paths is given by $d_i$. Define $P_2=(o,b_1,\ldots,b_{k-i})$ by $b_j(1)=c_{j+i+1}(1)-i-1$ and $b_j(2)=c_{j+i+1}(2)$. Obviously $b_0=o=(0,0)$, and $b_{k-i}(1)=c_{k+1}(1)-i-1=k-i$, $b_{k-i}(2)=c_{k+1}(2)=0$, hence $P_2\in \mathcal{P}_{k-i}$. The number of such paths is $d_{k-i}$. (Note that in case of $i=0$,  $\mathcal{P}_0=\{(o)\}$, $P_1=(o)$ and $P_2=(o,b_1,\ldots,b_k)$, so one only has to concatenate the two paths in such a way that the coordinates of the nodes of the path are valid, i.e. $c_0=o$, $c_1=(1,0)$, $c_{j+1}=(b_j(1)+1,b_j(2))$ for $1\leq j\leq k$ . If $P\in\mathcal{P}_{k+1}(0)$ is given, then $P=(o,(1,0),c_2,\ldots,c_{k+1})$ thus $P_1:=(o)$ and $P_2=(o,b_1,\ldots,b_k)$ with $b_j(1)=c_{j+1}(1)-1$ and $b_j(2)=c_{j+1}(2)$ for $1\leq j\leq k$.)
     Now it is easy to see that we found a bijection between $\mathcal{P}_{k+1}(i)$ and $\mathcal{P}_{i}\times \mathcal{P}_{k-i}$, hence  
    \begin{align}
        d_{k+1}=\sum_{i=0}^k d_id_{k-i}\,.
    \end{align}
Thus the sequence $d_0, d_1, \dots $ satisfies the same recursion which is valid for the Catalan numbers. Since as we pointed out above
the first two terms of these sequences coincide we obtain by induction that
$d_k= C_k = \binom{2k}{k}/(k+1)$ for $k\geq 0$, thus Lemma \ref{paths} is proved.$\Box$ \\ 
Summarizing what we know until this point we arrive at
$$ A(k,1)=C_k x^{k+1}+ f(x) $$
where $f$ is a polynomial of degree $k$. This concludes the proof our Theorem \ref{Hermite_moment}.
\end{proof}

We would like to point out that this methodology enables us to also compute
the coefficient of the second highest degree term as well.
\begin{proposition}\label{SecondC}
	Let $A(k,1)=C_k x^{k+1}+s_kx^{k} + g(x)$, where $g$ is a polynomial of degree $k-1$ at most. Then
\begin{equation}\label{s_k} s_k=-\left( 2^{2k-1}-\binom{2k-1}{k} \right). \end{equation}
\end{proposition}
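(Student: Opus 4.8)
The plan is to track the top two coefficients of $A(k,1)$ simultaneously through the recursion (\ref{recursion_A}), extending the graph-theoretic argument that produced the leading coefficient. The key observation is that every multiplier $(x-i)_{l-i+1}$ occurring in (\ref{recursion_A}) is \emph{monic}, with roots $i, i+1, \dots, l$, so its second coefficient equals $-\sum_{j=i}^{l} j$. Unrolling (\ref{recursion_A}) all the way down to the base terms $A(1,l)=(x)_{l+1}$ expresses $A(k,1)$ as a sum, over the $C_k$ directed paths $P$ of Lemma \ref{paths}, of products of such monic factors. Each product has leading coefficient $1$ (recovering the count $C_k$) and second coefficient $-R(P)$, where $R(P)$ denotes the sum of \emph{all} roots of \emph{all} factors attached to $P$. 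Consequently $s_k = -\sum_P R(P)$, and the whole problem reduces to evaluating this weighted path sum.

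First I would simplify $R(P)$. Along a path the consecutive factors have root sets $\{0,\dots,l_1\}$, $\{l_1,\dots,l_2\}$, \dots, $\{l_{k-1},\dots,l_k\}$ with $l_0=0$ and $l_k=1$; a short computation using $\sum_{j=a}^{b} j=\binom{b+1}{2}-\binom{a}{2}$ (valid even when $b=a-1$) and $\binom{l_m+1}{2}-\binom{l_m}{2}=l_m$ collapses the nested sums to $R(P)=\sum_{m=0}^{k} l_m$, i.e. the total height of the associated path in $G$. In the coordinates of Lemma \ref{paths} this reads $R(P)=k+\sum_{m=1}^{k-1} y_m$, so that
\[
-s_k \;=\; k\,C_k + H_k, \qquad H_k:=\sum_{P}\ \sum_{m=1}^{k-1} y_m ,
\]
where $H_k$ is the total interior height accumulated over all paths reaching $(k,0)$.

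Next I would compute $H_k$ by feeding the interior-height statistic through the first-return bijection $\mathcal P_{k+1}(i)\cong\mathcal P_i\times\mathcal P_{k-i}$ already established in the proof of Lemma \ref{paths}. Since that bijection lifts the first block (of length $i$) up by one unit before concatenation, the interior-height sum of the glued path equals $\mathrm h(P_1)+\mathrm h(P_2)+i$, where $\mathrm h(\cdot)$ is the interior-height sum of a single path; this yields the convolution recursion
\[
H_{k+1}=\sum_{i=0}^{k}\bigl(H_i C_{k-i}+C_i H_{k-i}+i\,C_i C_{k-i}\bigr),\qquad H_0=H_1=0 .
\]
Passing to generating functions $c(x)=\sum_k C_k x^k$, $h(x)=\sum_k H_k x^k$ and using the two identities already derived in the proof of Theorem \ref{Hermite_limit}, namely $c=1+xc^2$ and $1-2xc=\sqrt{1-4x}$, the recursion becomes $h/x=2hc+xcc'$, whence $h=x^2c^3/(1-4x)$. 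Adding $\sum_k kC_kx^k=xc'(x)$ gives the generating function of $-s_k$ in closed form,
\[
\sum_{k\ge1}(-s_k)\,x^k \;=\; xc'(x)+h(x)\;=\;\frac{1-\sqrt{1-4x}}{2(1-4x)}\;=\;\frac{x\,c(x)}{1-4x} .
\]
Reading off coefficients, or equivalently matching this against $\tfrac1{2(1-4x)}-\tfrac12$ minus $\tfrac12\bigl(\tfrac1{\sqrt{1-4x}}-1\bigr)$, identifies the $x^k$ coefficient as $2^{2k-1}-\binom{2k-1}{k}$, which is (\ref{s_k}).

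The main obstacle I anticipate is the second step: carrying out the telescoping of $R(P)$ down to the clean height statistic, and then transporting that statistic correctly through the first-return decomposition so that the extra $+i$ contributed by the unit lift is accounted for exactly. Once the recursion for $H_k$ is in hand, the generating-function endgame is essentially routine, precisely because the two nontrivial identities it needs, $c=1+xc^2$ and $1-2xc=\sqrt{1-4x}$, are supplied verbatim by the proof of Theorem \ref{Hermite_limit}. (A purely algebraic alternative, tracking both coefficients of $A(k,l)$ through explicit recursions and solving a bivariate generating function in $k$ and $l$, is also available but is considerably messier and obscures the combinatorial reason for the closed form.)
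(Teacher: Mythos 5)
Your proposal is correct; I checked the telescoping identity $R(P)=\sum_{m=0}^{k}l_m$, the transport of the height statistic through the first-return bijection (the glued path's interior height is indeed $\mathrm{h}(P_1)+\mathrm{h}(P_2)+i$), and the generating-function algebra, and everything holds (numerically, e.g., $-s_3=3C_3+H_3=15+7=22=2^5-\binom{5}{3}$). Your route runs parallel to the paper's proof --- the paper likewise reads off second coefficients as negative root sums via a polynomial assignment to edges (\ref{poly}), likewise uses the first-return decomposition of paths together with the observation that the unit lift in the bijection of Lemma \ref{paths} shifts every root by one, and likewise arrives at the same generating function $\sum_{k\geq 1}(-s_k)z^k=zc(z)/(1-4z)$ --- but it differs in two substantive ways. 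First, the paper never isolates a pure path statistic: it runs the decomposition directly on the polynomials, producing the mixed recursion (\ref{SecCoeff}), $s_k=\sum_{j=1}^{k}\left(s_{k-j}C_{j-1}+C_{k-j}(s_{j-1}-jC_{j-1})\right)$, in which the unknown coefficients and the Catalan numbers stay entangled; your splitting $-s_k=kC_k+H_k$, with $H_k$ the purely combinatorial total interior height satisfying a clean Catalan-type convolution, is a tidier bookkeeping of the same idea. Second, and more significantly, the endgames differ: the paper expands $zc(z)/(1-4z)$ into the double sum $\sum_{k}\sum_{j=1}^{k}C_{k-j}4^{j-1}z^k$ and then needs an auxiliary combinatorial argument --- decomposing the $2^{2k-1}$ symmetric walks of length $2k-1$ by their first entry into the negative region and invoking the ballot-type count $\binom{2k-1}{k}$ of nonnegative walks, cited from \cite{bil13} --- to identify that coefficient with $2^{2k-1}-\binom{2k-1}{k}$; you instead split $\frac{1-\sqrt{1-4z}}{2(1-4z)}$ as $\frac{1}{2}(1-4z)^{-1}-\frac{1}{2}(1-4z)^{-1/2}$ and read off two standard binomial series, so no walk-counting lemma is needed at all. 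What your version buys is a shorter, fully self-contained analytic finish; what the paper's version buys is a combinatorial interpretation of $-s_k$ as the number of length-$(2k-1)$ walks that ever go negative.
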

\begin{proof}
 First observe that $s_0 = 0$, due to the identity $A(0, 1) = x$. Next we are going to show that the following recursion holds:
\begin{equation} \label{SecCoeff} 
s_k= \sum_{j=1}^k\left(s_{k-j} C_{j-1}+C_{k-j}(s_{j-1}-jC_{j-1})\right)\ \  \mbox{   for } k\geq 1. 
\end{equation}
Using the notations of Lemma \ref{paths} we have
$$ \mathcal{P}_k=\bigcup_{j=0}^{k-1}{\mathcal{P}_k(j)}  $$
We showed in Lemma \ref{paths} that $|\mathcal{P}_k|$ is equal to the highest degree coefficient of $A(k,1)$ that is 
$C_k$.  Let us write on any edge $(a, b)$ of the graph the following polynomials:
\begin{equation} \label{poly} 
(a,b)\mapsto \begin{cases} 1 & \mbox{ if } b(2)=a(2)-1 \\ \prod_{j=a(2)+1}^{b(2)+1}(x-j) & \mbox{ if } b(2)\geq a(2) \end{cases} 
\end{equation}
and assign the polynomial $x$ to the origin. Using this we can assign polynomials to each path $P$ in the graph starting at the origin
as the product of the polynomials assigned to the edges along the path -- denote this by $p(x; P)$ -- and the one written on the origin.
We obtain $xp(x; P)$.
Recursion (\ref{recursion_A}) implies that $A(k,l)$ equals the sum of the polynomials corresponding to paths leading from the origin to $(k,l-1)$. 
\if 0
Denoting by $p_I$ the polynomial corresponding to the path $I$, one has
$$ A(k,l)=x\sum_{I\in \mathcal{P}^{(k,l-1)}}{p_I(x)} $$ where $\mathcal{P}^{(k,l-1)}$ denotes the set of paths from $(0,0)$ to $(k,l-1)$, in particular $\mathcal{P}_k =\mathcal{P}^{(k,0)}$, hence for $A(k,1)$ 
\fi
Especially, we have that 
\begin{align*} A(k,1)=x\sum_{P\in \mathcal{P}_k}p(x; P)=
x \sum_{j=1}^{k}{\sum_{ \substack{P\in \mathcal{P}_k(j-1)}}p(x; P) }.
\end{align*}
\if 0
Keeping this in mind the second highest degree coefficient of $A(k,l)$ is the sum of that of the polynomials corresponding to paths of $\mathcal{P}^{(k,l-1)}$. 
\fi
Let us observe, that the second highest degree coefficients are always negative. Furthermore, for any $P\in \mathcal{P}_k(j-1)$ one has
that $p(x; P)=q_1(x)q_2(x)$, where the polynomial $q_1(x)$ corresponds to a path starting from the origin, ending in $(j,0)$ and never touching the x-axis before that, while the polynomial $q_2(x)$ corresponds to the path from $(j,0)$ to $(k,0)$.
 Due to translation invariance of the graph the polynomial $q_2(x)$ coincides with a polynomial corresponding 
to a path from the origin to $(k-j,0)$. 
 Hence
\begin{eqnarray}
 A(k,1)&=&x\sum_{j=1}^k\sum_{\substack{ P\in \mathcal{P}_j(j-1)\\ Q\in \mathcal{P}_{k-j}}} p(x; P) p(x; Q)
= x \sum_{j=1}^k\left(\sum_{ P\in \mathcal{P}_j(j-1)} p(x; P)
\sum_{Q\in \mathcal{P}_{k-j}} p(x; Q)\right)\nonumber\\
&=& \sum_{j=1}^k\left(\sum_{ P\in \mathcal{P}_j(j-1)} p(x; P)
A(k-j, 1)\right)\label{split_recursion}
\,.
\end{eqnarray} 

Let us recall that in the proof of Lemma \ref{paths} we have constructed a bijection between 
$\mathcal{P}_{j-1}$ and $\mathcal{P}_j(j-1)$. Roughly speaking starting with a path in $\mathcal{P}_{j-1}$ keeping
the origin as a starting point but increasing the second coordinates of the other points along the path by one and finally adding
a last edge from $(j-1, 1)$ to $(j, 0)$ we obtained the corresponding trajectory. The map (\ref{poly}) shows that as a result of this construction all the roots of the polynomial corresponding to the path in  $\mathcal{P}_{j-1}$ will be increased by $1$. Since its degree
is $j$ the sum of the roots increases by $j$. Taking the summation with respect to all paths in $\mathcal{P}_{j-1}$ we obtain
that the highest degree coefficients of $\sum_{ P\in \mathcal{P}_j(j-1)} p(x; P)$ and $A(j-1, 1)$ are equal, while
the difference in the second highest degree coefficient is $j C_{j-1}$, thus equation (\ref{SecCoeff}) holds.
\if 0 where the first two equations are due to \textit{(iii)}, the third equation is due to \\ $\sum\limits_{I_2\in \mathcal{P}_{k-j}}\beta_{k-j}^{(I_2)}=s_{k-j}$, $\sum\limits_{I_2\in \mathcal{P}_j(j-1)}\alpha_{j-1}^{(I_2)}= s_{j-1}-jC_{j-1},$ which is a consequence of \textit{(ii)} and the final equation is due to $|\mathcal{P}_j(j-1)|=|\mathcal{P}_{j-1}|=C_{j-1}$, being a consequence of \textit{(i)} .
 The paths in $\mathcal{P}_j$ have contribution $ s_{k-j}C_{j-1}+ C_{k-j}(s_{j-1}-jC_{j-1}) $. The first term is the result of taking the highest degree coefficient from the paths from the origin to $(j,0)$ that do not touch the x-axis before $(j,0)$ - there is $C_{j-1}$ such paths - and taking the second highest coefficient from the paths leading from $(j,0)$ to $(k,0)$. Since the graph is translation invariant, the latter number is the same as $s_{k-j}$. The second term comes from considering the second highest coefficients of the paths from $(0,0)$ to $(j,0)$ that do not touch the x-axis and multiplying it by the highest degree coefficient from the paths leading from $(j,0)$ to $(k,0)$. Again by the translation invariance of the graph this latter is given by $C_{k-j}$. Note that the sum of the second highest degree coefficients of the paths from $(0,0)$ to $(j,0)$ that do not touch the axis is given by $ s_{j-1}-jC_{j-1}$, since by (\ref{poly}) the difference of the second degree coefficient of a path from $(0,0)$ to $(j-1,0)$ and a path from $(0,0)$ to $(j-1,1)$ is $j$ and there is $C_{j-1}$ paths that start from $(0,0)$, end in $(j,0)$ and do not touch the x-axis before $(j,0)$.  \\ \fi

This recursion leads to the generating function 
\begin{align*} \mathcal{S}(z)&=\sum_{k=1}^\infty{s_kz^k}=-\frac{zc(z)\frac{d}{dz}(zc(z))}{1-2zc(z)}=-\frac{z}{1-4z}c(z)=\\
&= -\sum_{k=1}^\infty \sum_{j=1}^{k}C_{k-j}4^{j-1} z^k .
 \end{align*}
In order to determine this value more explicitely let us consider the symmetric walk on $\Z$ with $2k-1$ steps.  
The number of all possible trajectories is $2^{2k-1}$. 
Write the set of possible trajectories as the disjoint union of paths that enter the negative region in the $(2j+1)^{th}$ step first with $0\leq j \leq k-1$ and those that never enter the negative region. Rewriting (\ref{s_k}) in the following way
\begin{equation} \label{otherpaths} \sum_{j=0}^{k-1} {C_j2^{2(k-j-1)}} \end{equation} 
one has that this sum \label{otherpaths} counts the trajectories of the former type, while the number of trajectories of the latter type is given by $\binom{2k-1}{k}$ (see e.g. \cite{bil13} p.71), hence
\if 0 A member $C_j 2^{2(k-j-1)}$ is the number of trajectories, along which the walker enters the negative region in the $(2j+1)^{st}$ step first, meaning that he stayed nonegative until the $(2j)^{th}$ step, hence the first term $C_j$. Obviously the $(2j+1)^{st}$ step has to be a negative step and the rest remains free choice, hence the term $2^{2(k-j-1)}$. Although (\ref{otherpaths}) does not account for those trajectories that stay nonnegative throughout all the steps, it is not hard to show that the number of these trajectories is $\binom{2k-1}{k}$ (see \cite{bil13} p.71), hence \fi
$$ s_k=- \left(2^{2k-1}-\binom{2k-1}{k} \right) $$ 
and so the proof is complete. 
\end{proof}

\begin{remark}
 Note that Proposition \ref{SecondC} implies that the convergence rate in Theorem \ref{Hermite_SC} cannot be faster than $O(1/n)$.
\end{remark}

\section{Concluding remarks}
\label{Conc}

In the introduction we have seen, that even under general conditions on the random symmetric matrix the expectation of its characteristic polynomial is the monic Hermite polynomial of appropiate degree. The limiting distribution of the roots of the $H_n$ is the semicircle law as it is shown in Theorem \ref{Hermite_limit}. It is also known that the limiting distribution of the eigenvalues of a properly scaled Wigner matrix is given by the semicircle law \cite{Ar67} in the same sense as above, hence there is a deep connection between the Hermite polynomials, random symmetric matrices with independent elements and the semicircle law. This suggests that studying the Hermite polynomials and their roots could give us a deeper insight on the behavior of the eigenvalues of a Wigner random matrix.

\bibliography{KM}
\bibliographystyle{plain}

\end{document}